\documentclass[12pt,reqno]{amsart}

\usepackage{graphicx,caption}
\usepackage{float}
\usepackage{hyperref}
\usepackage{fullpage}
\usepackage{pgf,tikz}
\usetikzlibrary{arrows}

\newcommand{\reg}{\operatorname{reg}}

\newcommand{\lbd}{\operatorname{lbd}}
\newcommand{\bd}{\operatorname{bd}}

\newtheorem{theorem}{Theorem}

\newtheorem{corollary}[theorem]{Corollary}

\newtheorem{example}[theorem]{Example}

\begin{document}
\title{An upper bound for the regularity of binomial edge ideals of
trees}
\author{A. V. Jayanthan}
\address{Department of Mathematics, Indian Institute of Technology
Madras, Chennai, INDIA - 600036.}
\email{jayanav@iitm.ac.in}
\author{N. Narayanan}
\address{Department of Mathematics, Indian Institute of Technology
Madras, Chennai, INDIA - 600036.}
\email{naru@iitm.ac.in}
\author{B. V. Raghavendra Rao}
\address{Department of Computer Science and Engineering, Indian
Institute of Technology Madras, Chennai, INDIA - 600036.}
\email{bvrr@iitm.ac.in}
\keywords{Binomial edge ideal, Castelnuovo-Mumford regularity, Block
graph, Tree}
\thanks{AMS Subject Classification (2010): 13D02, 05E40}

\maketitle

\begin{abstract}
In this article we obtain an improved upper bound for the regularity
of binomial edge ideals of trees.
\end{abstract}

Let $G$ be a finite simple graph on $[n]$. The binomial edge ideal
$J_G$ is the ideal in $S=K[x_1, \ldots, x_n, y_1, \ldots, y_n]$
generated by the binomials $\{x_iy_j - x_jy_i \mid \{i, j\} \in
E(G)\}$, where $K$ is a field and $E(G)$ denotes the set of all edges
of $G$. This notion was introduced by Herzog et al., \cite{hhhkr10}
and independently by Ohtani \cite{ohtani11}. Ever since then researchers have
been trying to understand the interplay between the combinatorial
invariants of the graph $G$ and the algebraic invariants associated to
the ideal $J_G$. In particular, there have been a lot of attempts on
estimating the Castelnuovo-Mumford regularity of the binomial edge
ideals using combinatorial invariants.

It is known that $\ell \leq \reg(S/J_G) \le n-1$, where $n$ is the number of
vertices in $G$ and $\ell$ denotes the length of a longest induced
path in $G$, \cite{mm2013}. Further, in the same article, Matsuda and
Murai conjectured that $\reg(S/J_G) = n-1$ if and only if $G$ is a
path. This conjecture was settled in the affirmative by Kiani and Saeedi
Madani, \cite{km16}. 

A vertex $v$ in $G$ is said to be a \textit{cut vertex} if $G
\setminus \{v\}$ contains strictly more components than $G$.  
A block of a graph is a maximal induced subgraph without any cut
vertex and a \textit{block graph} is a graph in which every block is a
complete graph. Saeedi Madani and Kiani proved that if $c(G)$ denotes
the number of maximal cliques, then for a closed graph $G$,
$\reg(S/J_G) \leq c(G)$, \cite{mk2012}. For a block graph
$G$, $c(G)$ is same as the number of blocks in $G$. Saeedi Madani and
Kiani conjectured that the above inequality holds for all graphs. They
proved the conjecture for the case of generalized block graphs,
\cite{mk-arxiv-13}. In \cite{jnr16}, the authors obtained a lower
bound for the regularity of the binomial edge ideal of trees and
characterized the trees having minimal regularity.  Recently, Herzog
and Rinaldo computed one of the extremal Betti number of the binomial
edge ideal of a block graph and classified block graphs admitting
precisely one extremal Betti number, \cite{hr18}. As a consequence,
they generalized the lower bound obtained in \cite{jnr16} for block
graphs and also characterized the block graphs attaining the lower
bound. In \cite{mr18}, Mascia and Rinaldo computed the Krull dimension
and regularity of block graphs.

Trees are an important subclass of block graphs. For a tree $T$ on $n$
vertices, $c(T) = n-1$ thus making the bound $\reg(S/J_T) \leq c(T)$
far from being sharp. Chaudhry et al. proved that a tree $T$ is a
caterpillar tree if and only if $\reg(S/J_T) = \ell$, where $\ell$ is
the length of a longest path in $T$, \cite{cdi14}. The authors of this article
generalized this result to obtain an upper bound for the regularity of
a class of trees known as lobster trees, \cite{jnr16}. In this
article, we obtain an improved upper bound for $\reg(S/J_T)$. The
upper bound obtained is better than the presently known bound, $n-1$,
for most of the trees.

\section*{Upper bound for regularity of trees}

Let $G$ be a block graph.  If two distinct blocks in $G$ share a
vertex, then it is a cut vertex. A block is
said to be an \textit{end-block} if it contains at most one cut
vertex.  We define the block degree $\bd(v)$ of a cut vertex to be the
number of blocks incident to $v$. A spine of a block graph $G$ is
defined to be a maximum length path $P$ in $G$ where every edge of $P$
is a block in $G$. Note that it is possible that the spine is a single
vertex.  For $v \in V(G)$, let $\lbd(v)$ denote the number of large
blocks, i.e., blocks of size at least three, incident at $v$.

One of the terminology that we need is that of gluing of two graphs at
a vertex. Let $G$ be a graph. For a subset $W$ of $V(G)$, let $G[W]$
denote the induced subgraph of $G$ on the vertex set $W$. For a cut
vertex $v$ in $G$, let $G_1, \ldots, G_k$ denote the components of $G
\setminus \{v\}$. Let $G_i' = G[V(G_i) \cup \{v\}]$. Then we say that
$G$ is obtained by \textit{gluing} $G_1, \ldots, G_k$ at $v$,
\cite{rr14}.

A vertex $v$ in a graph $G$ is said to be a \textit{free vertex} if it
is part of exactly one maximal clique.  Let $G$ be a block graph and
$v$ be a vertex which is not a free vertex of a graph $G$. Let $G'$
denote the graph obtained by adding edges between all the vertices of
$N_G(v)$, $G''$ denote the graph $G \setminus \{v\}$ and $H$ denote
the graph $G' \setminus \{v\}$. Then there is an exact sequence,
\cite{ehh11, bms18}:

\begin{eqnarray}\label{ehh}
  0 \longrightarrow \frac{S}{J_G} \longrightarrow \frac{S}{J_{G'}} \oplus \frac{S}{J_{G''}}
  \longrightarrow \frac{S}{J_H} \longrightarrow 0
\end{eqnarray}

If $G$ is obtained by identifying a vertex each of $k$ cliques of size
at least three, then by \cite{jnr16}, $\reg(S/J_G) = k$. Now, we
consider block graphs having non-trivial spine.

\begin{theorem}\label{reg-bound}
Let $G$ be a connected block graph in which every block of size at
least three is an end-block. Let $P$ be a spine of $G$ of length
$\ell(G)
\geq 1$, $e_2(G) = |\{\{a,b\} \in E(G)  \setminus E(P) ~ \mid \bd(a)
\leq 2 \text{ and } \bd(b) \leq 2 \}|,
~C_G = \{v \in V(G) \setminus V(P) \mid \bd_G(v) \geq 3 \}$ and
$b(G)$ be the number of large end-blocks  that intersect the spine
$P$.  Then, \[\reg(S/J_G) \leq e_2(G) + \ell(G) + b(G)+ \sum_{v \in C_G} \max\{\lbd(v), 2\}.\]
\end{theorem}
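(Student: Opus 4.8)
The plan is to argue by induction on the number of blocks of $G$, using the short exact sequence (\ref{ehh}) as the main engine. The base case is a path, where $G$ has no large block and no off-spine branching, so $e_2(G)=b(G)=0$, $C_G=\emptyset$, and the bound reduces to the known equality $\reg(S/J_P)=\ell(G)$. For the inductive step I would apply (\ref{ehh}) at a suitably chosen cut vertex $v$ (note that in a block graph the non-free vertices are exactly the cut vertices, so this is the only way to make the sequence non-degenerate), and read off from it the standard estimate
\[
\reg(S/J_G)\le \max\bigl\{\reg(S/J_{G'}),\ \reg(S/J_{G''}),\ \reg(S/J_H)+1\bigr\}.
\]
Two further inputs feed the induction: the additivity of regularity over connected components, which I would use to handle $G''=G\setminus\{v\}$ once deleting $v$ disconnects the graph, and the quoted fact that $k$ cliques of size at least three glued at a single vertex have regularity exactly $k$, which is what ultimately produces the $\lbd(v)$ contributions.

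The vertex $v$ should be chosen at the periphery of the block-cut tree so that the three derived graphs are genuinely simpler and, as far as possible, remain within the hypothesis class. Concretely I would separate the three sources of the right-hand side and peel them one at a time: a large end-block meeting the spine should be removed so that $b(G)$ drops by one while the regularity drops by at most one; a pendant or low-block-degree edge off the spine should be removed against one unit of $e_2(G)$; and an off-spine branch vertex $v\in C_G$ should be resolved so that its whole local contribution is absorbed into the single term $\max\{\lbd(v),2\}$. For a large end-block with cut vertex $c$, reducing at $c$ detaches the block's free vertices in $G''$ (so additivity applies to the resulting disconnected pieces), while $G'$ and $H$ are smaller graphs to which either the induction hypothesis or a direct estimate can be applied; one then compares the resulting bounds term-by-term with the right-hand side for $G$.

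The hard part will be controlling the \emph{cliqued} graph $G'$, and this is where the argument must be most careful. Forming $G'$ replaces $N_G(v)$ by a clique, which has two undesirable effects: it can inflate $e_2$, since the newly added edges among former neighbors of $v$ all have low block degree and would be counted, and it can merge blocks into a large block that is no longer an end-block, violating the standing hypothesis. A quick check, for instance with a triangle glued to the end of a path and $v=c$ its cut vertex, shows that naively bounding $\reg(S/J_{G'})$ by the right-hand side of the theorem for $G'$ overshoots the target for $G$. So the crux is to choose $v$ so that $G'$ either still lies in the class with strictly smaller invariants, or can be bounded by a separate, sharper estimate rather than by a blind application of the inductive formula.

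The remaining delicate point is the exact cost $\max\{\lbd(v),2\}$ at a branch vertex. Gluing at a cut vertex only yields the crude subadditive bound $\reg(S/J_G)\le\sum_i\reg(S_i/J_{G_i})$, which at a branch vertex carrying $\bd(v)$ small blocks would charge $\bd(v)$ rather than $2$; the gap is already visible in a star, where $\reg(S/J_{K_{1,d}})=2$ for every $d\ge 2$. Establishing that a branch vertex whose incident blocks are mostly edges costs only $2$, while a branch of $\lbd(v)\ge 2$ large blocks costs $\lbd(v)$, is precisely the improvement the theorem encodes, and I expect it to require a direct analysis of the three terms of (\ref{ehh}) at such a $v$ rather than an appeal to gluing alone. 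Once these local estimates are in place, the induction closes by verifying that the right-hand side for each of $G'$, $G''$, and $H$ (the last shifted by one) does not exceed the right-hand side for $G$.
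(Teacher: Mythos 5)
Your outline correctly assembles the right toolkit (the exact sequence (\ref{ehh}), additivity of regularity over components, the glued-cliques fact) and, to your credit, correctly diagnoses the two genuine cruxes: controlling the cliqued graph $G'$, and showing a branch vertex costs only $\max\{\lbd(v),2\}$. But the proposal stops exactly there --- both cruxes are explicitly deferred (``the crux is to choose $v$ so that\dots'', ``I expect it to require a direct analysis\dots''), and the missing idea that resolves them is the actual content of the paper's proof. The paper does \emph{not} induct on the number of blocks, and it never reduces at or near the spine. It inducts on $d(G)=\sum_x d(x,P)$, the total distance of the cut vertices from the spine, and always reduces at a cut vertex $v$ \emph{farthest from $P$}. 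This single choice is what tames $G'$: since no cut vertex lies beyond $v$, every block at $v$ other than the edge $\{v,u_v\}$ toward the spine is an end-block, so in $G'$ the merged clique $\{v\}\cup N_G(v)$ contains at most one cut vertex ($u_v$) and hence is again an end-block --- $G'$ stays in the hypothesis class with $d(G')<d(G)$ and $C_{G'}=C_G\setminus\{v\}$. The bookkeeping then splits on $d(v,P)$: if $d(v,P)=1$ the merged block meets the spine and $b(G')=b(G)+1$; if $d(v,P)>1$ then $\lbd_{G'}(u_v)=\lbd_G(u_v)+1$; in either case the loss of the term $\max\{\lbd_G(v),2\}\ge 2$ yields $\reg(S/J_{G'})\le e_2(G)+\ell(G)+b(G)+\sum_{x\in C_G}\max\{\lbd_G(x),2\}-1$, which is what the estimate $\reg(S/J_G)\le\max\{\reg(S/J_{G''}),\reg(S/J_{G'})+1\}$ requires. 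The $\max\{\lbd(v),2\}$ cost you worried about comes out of $G''=G\setminus\{v\}$, which decomposes as $G_1$ (the component containing $P$) plus $\lbd_G(v)$ cliques plus isolated vertices, so additivity gives $\reg(S/J_{G''})=\reg(S/J_{G_1})+\lbd_G(v)$ and induction on $G_1$ finishes it; no separate analysis of stars is needed.

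The second gap is your base case. With induction on blocks starting from a path, every configuration sitting \emph{on} the spine --- a spine vertex carrying many pendant edges (your $K_{1,d}$ example), or a large end-block meeting the spine --- must be peeled by your inductive step, and as your own triangle-at-the-end-of-a-path example shows, the exact sequence applied there overshoots the target; you would in effect have to re-prove known results inside the induction. The paper avoids this entirely: its base case is $d(G)=0$, i.e., every cut vertex lies on the spine, and such graphs (a spine with cliques attached along it) are exactly the case covered by \cite[Theorem 4.5]{jnr16}, which is cited as a black box. Likewise, the off-spine vertices with $\bd(v)=2$ are not handled by the exact sequence at all but by the gluing theorem \cite[Theorem 3.1]{jnr16}, which gives the exact equality $\reg(S/J_G)=\reg(S/J_{G_1})+1$ charged against one unit of $e_2$. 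So while your plan points at the right difficulties, it lacks the induction parameter, the choice of $v$, and the base-case citation that make the argument close; as written it would not compile into a proof without supplying precisely these ideas.
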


\begin{proof} 

If there is no cut vertex in $G$, then $G$ is an edge and hence the
assertion holds, since $e_2(G) = 0, ~b(G) = 0$ and $C_G = \emptyset$.

Assume that $G$ has at least one cut vertex. Let $d(x,P)$ denote the
distance of the vertex $x$ from the spine $P$ and $\displaystyle{d(G) =
\sum_{x \text{ is a cut vertex in }G} d(x,P)}$. We
apply induction on $d(G)$. If $d(G) = 0$, then $G$ is a graph with a spine
$P$ and some cliques attached to $P$. Therefore, the assertion follows
from \cite[Theorem 4.5]{jnr16}.

Let $d(G) > 0$. Let $v$ be a cut vertex in $G$ such that $d(v,P)$ is
maximum. 
\vskip 2mm
\noindent
\textsc{Case I:} If $\bd(v) = 2$, then there exists a graph $G_1$ containing
$v$ as a free vertex and a clique $C$ such that $G$ is obtained by gluing
$G_1$ and $C$ at $v$. Then $e_2(G_1) = e_2(G) - 1$, $\ell(G) =
\ell(G_1), ~C_G = C_{G_1}$ and $b(G) = b(G_1)$. Moreover, $d(G_1) <
d(G)$. By induction, \[ \reg(S/J_{G_1}) \leq e_2(G_1) + \ell(G_1) +
b(G_1) + \sum_{v \in C_{G_1}} \max\{\lbd(v), 2\}.\]
By \cite[Theorem 3.1]{jnr16}, $\reg(S/J_G) = \reg(S/J_{G_1}) + 1$.
Hence the assertion follows.

\vskip 2mm \noindent
\textsc{Case II:} Assume that $\bd(v) \geq 3$. Then $v \in C_G$. 
Since $v$ is not a free vertex, it follows from the exact 
sequence (\ref{ehh}) that
%
%
\[\reg\left(\frac{S}{J_G}\right) \leq
  \max\left\{\reg\left(\frac{S}{J_{G'}}\oplus \frac{S}{J_{G''}}\right),
  \reg\left(\frac{S}{J_H}\right)+1\right\}.
\]
Since $H$ is an induced subgraph of $G'$, $\reg(S/J_H) \leq
\reg(S/J_{G'})$. Therefore, we get
\[\reg\left(\frac{S}{J_G}\right) \leq
  \max\left\{\reg\left(\frac{S}{J_{G''}}\right),
  \reg\left(\frac{S}{J_{G'}}\right) + 1 \right\}.\]
We show that both the entries on the right hand side of the
above inequality satisfies the bound given in the assertion.

Note that $v$ is not a cut vertex in $G'$ and if $v \neq y \in V(G)$ is a cut
vertex of $G'$, then it is a cut vertex of $G$ as well. Therefore,
$d(G') = d(G) - d(v, P) < d(G)$. We also have $C_{G'} = C_G \setminus
\{v\}$. It can be seen that $e_2(G') \leq e_2(G)$ and $\ell(G) = \ell(G')$.
By induction hypothesis, 
\[\reg(S/J_{G'}) \leq e_2(G') + \ell(G') + b(G') + 
\sum_{x \in C_{G'}} [\max\{\lbd(x), 2\}].
\]
If $d(v, P) = 1$, then $b(G') = b(G)+1$ and for every $u \in C_{G'},
~\lbd_{G'}(u) = \lbd_{G}(u)$. Therefore,
\[\sum_{x \in C_G} [\max\{\lbd_{G}(x), 2\}]
= \sum_{x \in C_{G'}} [\max\{\lbd_{G'}(x),2\}] +
\max\{\lbd_{G}(v),2\}.
\]
Hence
\begin{eqnarray*}
  \reg(S/J_{G'}) & \leq & e_2(G) + \ell(G) + b(G) + 1 +\sum_{x \in C_{G'}} 
	[\max\{\lbd_{G}(x), 2\}]  \\
	 & \leq & e_2(G) + \ell(G) + b(G)+\sum_{x \in C_G} 
	[\max\{\lbd_{G}(x), 2\}]  - 1.
\end{eqnarray*}

If $d(v,P) > 1$, then $b(G') = b(G)$. 
Further, there is a vertex $u_v \in C_{G'}$ which is the unique cut
vertex neighbor of $v$. Morever, we have $\lbd_{G'}(u_v) =
\lbd_G(u_v) + 1$ and for every $u \in C_{G'} \setminus
\{u_v\},
~\lbd_{G'}(u) = \lbd_{G}(u)$. Therefore,

\begin{eqnarray*}
  \sum_{x \in C_{G'}} [\max\{\lbd_{G}(x), 2\}]
  & = & \sum_{x \in C_{G'}\setminus \{u_v\}}
  [\max\{\lbd_{G'}(x),2\}] + \max\{\lbd_{G'}(u_v), 2\}  \\
  & = & \sum_{x \in C_{G'}\setminus \{u_v\}}
  [\max\{\lbd_{G}(x),2\}] + \max\{\lbd_{G}(u_v)+1, 2\}  \\
   & \leq & \sum_{x \in C_{G'}\setminus \{u_v\}}
  [\max\{\lbd_{G}(x),2\}] + \max\{\lbd_{G}(u_v), 2\} + 1 \\
 & \leq & \sum_{x \in C_{G'}\setminus \{u_v\}}
  [\max\{\lbd_{G}(x),2\}]  
   + \max\{\lbd_{G}(u_v), 2\} \\
   & &+ \max\{\lbd_G(v), 2\}-1 \\
  & = & \sum_{x \in C_G} [\max\{\lbd_{G}(x),2\}] - 1.
\end{eqnarray*}
Therefore, \[\reg(S/J_{G'}) \leq e_2(G) + \ell(G) + b(G) + \sum_{x \in
C_G} [\max\{\lbd_{G}(x),2\}] - 1.\]

Now we consider the graph $G'' = G \setminus \{v\}$. Let $\lbd_G(v) =
r$. Then $G''$ is the disjoint union of $G_1$ which is the
connected component of $G''$ containing $P$ and $C_1, \ldots, C_r$ 
maximal cliques on at least $2$ vertices and possibly some isolated vertices.
Hence $\reg(S/J_{G''}) =
\reg(S/J_{G_1}) + r$. For all $x \in V(G_1), ~\lbd_{G_1}(x) =
\lbd_G(x)$ and $d(G_1) = d(G) - d(v,P) < d(G)$. Therefore, by induction
hypothesis
\[
  \reg(S/J_{G_1}) \leq e_2(G'') + \ell(G_1) + b(G_1)  + \sum_{x \in
	C_{G_1}} [\max\{\lbd_{G_1}(x),2\}].
  \]
Now, there are two possibilities, namely $e_2(G'') = e_2(G) + 1$ or
$e_2(G'') = e_2(G)$.

If $e_2(G'') = e_2(G) + 1$, then the unique cut vertex neighbor $u_v$ of $v$
has block degree 2 in $G_1$. Therefore $C_{G_1} = C_{G} \setminus
\{v,u_v\}$ so that 
\begin{eqnarray*}
  \reg(S/J_{G_1}) & \leq & e_2(G) + 1 + \ell(G_1) + b(G_1)  + \sum_{x \in
	C_{G_1}} \max\{\lbd_{G_1}(x),2\}  \\
	& \leq & e_2(G) + \ell(G_1) + b(G_1) + \sum_{x \in
	  C_{G_1}} [\max\{\lbd_{G_1}(x),2\}] 
	   + \max\{\lbd_{G_1}(u_v),2\} \\
	& \leq & e_2(G) + \ell(G_1) + b(G_1) + \sum_{x \in
	  C_{G \setminus \{v\}}} [\max\{\lbd_{G}(x),2\}].
\end{eqnarray*}
Note also that $r = \lbd_G(v)$.
Hence \[\reg(S/J_{G''}) = \reg(S/J_{G_1}) + r \leq e_2(G) + \ell(G) + b(G)  + \sum_{x \in
  C_{G }} \max\{\lbd_{G}(x),2\}.\]

For the case when $e_2(G'') = e_2(G)$, we have $C_{G_1} = C_{G
\setminus \{v\}}$. Now as argued in the previous case, one can
conclude that 
\[\reg(S/J_{G''}) \leq e_2(G) + \ell(G) + b(G)  + \sum_{x \in
  C_{G }} \max\{\lbd_{G}(x),2\}.\]
\end{proof}

As an immediate consequence, we generalize \cite[Corollary 4.8]{jnr16}
to get an upper bound for the regularity of all trees.
\begin{corollary}\label{tree-ub}
Let $T$ be a tree on $[n]$ with spine $P$ of
length $\ell$. Let $e_2$ denote the number of edges that are not in
$P$ and with both end points having degree at most $2$
and $d_3$ denote the number of vertices, not in
$P$, and having degree at least $3$. Then
\[\reg(S/J_T) \leq e_2 + \ell + 2d_3. \]
\end{corollary}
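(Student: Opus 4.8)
The plan is to derive the corollary by directly specializing Theorem \ref{reg-bound} to the case of a tree, after checking that its hypotheses are met and translating each combinatorial quantity into the language of trees. The key observation is that in a tree every block is a single edge, i.e.\ a copy of $K_2$. Consequently there are no blocks of size at least three, so the hypothesis of Theorem \ref{reg-bound} (that every large block be an end-block) holds vacuously, and the theorem applies to any tree $T$ with $\ell \geq 1$.

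Next I would record the dictionary between the invariants appearing in Theorem \ref{reg-bound} and those in the statement of the corollary. Since every block incident to a vertex $v$ is just an edge at $v$, the block degree satisfies $\bd_G(v) = \deg_T(v)$ for every vertex $v$. In particular, the spine $P$, being a maximum-length path all of whose edges are blocks, is simply a longest path in $T$, so its length equals $\ell$; the set $C_G = \{v \in V(T) \setminus V(P) \mid \bd_G(v) \geq 3\}$ becomes exactly the set of vertices off $P$ with $\deg_T(v) \geq 3$, which has cardinality $d_3$; and $e_2(G)$ counts precisely the edges outside $P$ both of whose endpoints have degree at most two, so that $e_2(G) = e_2$.

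Because $T$ has no large block, no end-block is large, whence $b(G) = 0$; likewise $\lbd(v) = 0$ for every vertex, so $\max\{\lbd(v), 2\} = 2$ for each $v \in C_G$ and the sum contributes $2 d_3$. Substituting these values into the bound of Theorem \ref{reg-bound} yields
\[
\reg(S/J_T) \leq e_2(G) + \ell(G) + b(G) + \sum_{v \in C_G} \max\{\lbd(v), 2\} = e_2 + \ell + 2 d_3,
\]
as desired. The degenerate case $\ell = 0$, in which $T$ is a single vertex and $J_T = 0$, is immediate, since then $e_2 = d_3 = 0$ and $\reg(S/J_T) = 0$.

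As every step is a direct translation, I do not expect a serious obstacle here. The only point requiring a moment's care is verifying the identification $\bd_G = \deg_T$ and, through it, confirming that the spine of a tree is genuinely a longest path, so that $\ell(G)$ coincides with the $\ell$ of the corollary and that $b(G)$ and every $\lbd(v)$ vanish.
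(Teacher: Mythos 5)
Your proposal is correct and follows essentially the same route as the paper: both simply specialize Theorem \ref{reg-bound} to trees by noting that every block of a tree is an edge, so $b(T)=0$ and $\lbd_T(x)=0$ (hence each summand is $2$), giving $e_2+\ell+2d_3$. Your version merely spells out the dictionary ($\bd_T=\deg_T$, spine $=$ longest path) and the trivial $\ell=0$ case in more detail than the paper does.
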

\begin{proof} Following the notation of Theorem \ref{reg-bound}, 
$b(T) = 0$ and $\lbd_T(x) = 0$ for each $x \in V(T)$
so that $\max\{\lbd_T(x), 2\} = 2$. Now the assertion follows
directly from Theorem \ref{reg-bound}.
\end{proof}

\begin{example}
Here we illustrate by an example a block graph considered in Theorem
\ref{reg-bound}.

\begin{minipage}{\linewidth}
\begin{minipage}{0.65\linewidth}
Let $G$ be the graph given on the right side. Following the notation
in Theorem \ref{reg-bound}, we can see that $e_2(G) = 1, \ell(G) = 4,
b(G) = 0$ and $|C_G| = 2$. Therefore, we get $\reg(S/J_G) \leq 9$.
We have computed the regularity of this graph using Macaulay 2 and
have found that the graph attains the regularity upper bound.
\end{minipage}
  \begin{minipage}{0.3\linewidth}
	\begin{figure}[H]
	  \begin{tikzpicture}[scale=1.8]
\draw (1.5,-0.5)-- (2,-0.5);
\draw (2,-0.5)-- (2.5,-0.5);
\draw (2.5,-0.5)-- (3,-0.5);
\draw (2.5,-0.5)-- (2.5,-1);
\draw (2,-0.5)-- (2,0);
\draw (3,-0.5)-- (3.5,-0.5);
\draw (3,-0.5)-- (3,0);
\draw (2.5,-0.5)-- (2,-1);
\draw (2,-1)-- (2.3,-1.5);
\draw (2,-1)-- (1.74,-1.51);
\draw (2.5,-0.5)-- (3,-1);
\draw (3,-1)-- (3,-1.5);
\draw (3,-1.5)-- (3.5,-1.5);
\draw (3.5,-1.5)-- (3.5,-1);
\draw (3.5,-1)-- (3,-1);
\draw (3,-1)-- (3.5,-1.5);
\draw (3.5,-1)-- (3,-1.5);
\draw (3,-1)-- (2.69,-1.5);
\draw (2.5,-1)-- (2.5,-1.5);
\begin{scriptsize}
\fill (1.5,-0.5) circle (1.5pt);
\fill (2,-0.5) circle (1.5pt);
\fill (2.5,-0.5) circle (1.5pt);
\fill (3,-0.5) circle (1.5pt);
\fill (2.5,-1) circle (1.5pt);
\fill (2,0) circle (1.5pt);
\fill (3.5,-0.5) circle (1.5pt);
\fill (3,0) circle (1.5pt);
\fill (2,-1) circle (1.5pt);
\fill (2.3,-1.5) circle (1.5pt);
\fill (1.74,-1.51) circle (1.5pt);
\fill (3,-1) circle (1.5pt);
\fill (3,-1.5) circle (1.5pt);
\fill (3.5,-1.5) circle (1.5pt);
\fill (3.5,-1) circle (1.5pt);
\fill (2.69,-1.5) circle (1.5pt);
\fill (2.5,-1.5) circle (1.5pt);
\end{scriptsize}
\end{tikzpicture}
\end{figure}
\end{minipage}
\end{minipage}

\end{example}

We also note that the upper bound we obtained in Theorem
\ref{reg-bound} coincides with the lower bound for the regularity of
\textit{Flower graph} $F_{h,k}(v)$ proved in Corollary 3.5 of
\cite{mr18}. 

\begin{corollary}
Let $F_{h,k}(v)$ denote the graph obtained by identifying a free
vertex each of $h$ copies of $C_3$ and $k\geq 1$ copies of $K_{1,3}$ at a
common vertex $v$. Then $\reg(S/J_{F_{h,k}(v)}) = 2k+h.$
\end{corollary}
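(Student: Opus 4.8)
The plan is to recognize the flower graph $F_{h,k}(v)$ as a special case covered by Theorem~\ref{reg-bound}, so that the upper bound follows immediately, and then to invoke the matching lower bound from \cite{mr18} to upgrade the inequality to an equality. First I would verify that $F_{h,k}(v)$ satisfies the hypotheses of Theorem~\ref{reg-bound}: it is a connected block graph whose blocks are the $h$ triangles $C_3$ (size three) and the $k$ copies of $K_{1,3}$ (each consisting of three edges, i.e.\ three blocks of size two). Every block of size at least three is a triangle, and since each triangle meets the rest of the graph only at the common vertex $v$, it contains exactly one cut vertex and is therefore an end-block, as required.

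The crux of the argument is the bookkeeping of the four quantities appearing in the bound. I would choose the spine $P$ to run through $v$ along one of the $K_{1,3}$'s: taking one pendant edge of a star into $v$ and then out along another pendant edge gives a path of length $\ell(F_{h,k}(v)) = 2$, every edge of which is a block (an edge is a $K_2$ block). With this choice I expect $b(F_{h,k}(v)) = 0$, since the large (triangle) end-blocks all meet at $v$, which lies \emph{on} the spine, so no large end-block intersects $P$ in the sense counted by $b$. The set $C_G$ consists of vertices off the spine with block degree at least three; the only vertex of high block degree is $v$ itself, which lies on $P$, so I would check that $C_{F_{h,k}(v)} = \emptyset$. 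Finally $e_2$ counts edges off the spine both of whose endpoints have block degree at most two: these are precisely the remaining $k-1$ stars' central edges and the triangle edges not meeting $v$, and I would need to tally these carefully to land on the value $2k + h$. Assembling the pieces, Theorem~\ref{reg-bound} would yield $\reg(S/J_{F_{h,k}(v)}) \le 2k + h$.

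The main obstacle I anticipate is getting the combinatorial count exactly right, because the flower graph does not fit cleanly into the ``spine plus attached cliques'' picture: the stars $K_{1,3}$ are not single edges, and distributing their edges between $\ell(G)$, $e_2(G)$, and the various block-degree conditions requires care about which edges are charged where. In particular I must confirm that the edges of the $K_{1,3}$'s that do not lie on the chosen spine, together with the triangle edges, sum correctly, and that no vertex other than $v$ inadvertently acquires block degree three. An alternative and perhaps cleaner route would be to compute the bound directly from the structure at $v$: since $F_{h,k}(v)$ is obtained by gluing $h$ triangles and $k$ stars at $v$, one could instead appeal to the observation recorded just before Theorem~\ref{reg-bound} that gluing cliques at a vertex gives regularity equal to the number of cliques, combined with the additivity results of \cite{jnr16}, to cross-check the value $2k+h$.

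To finish, I would cite \cite[Corollary 3.5]{mr18}, which establishes the lower bound $\reg(S/J_{F_{h,k}(v)}) \ge 2k + h$ (this is precisely the point noted in the remark preceding the corollary, that our upper bound coincides with their lower bound). Combining the two inequalities gives $\reg(S/J_{F_{h,k}(v)}) = 2k + h$, completing the proof.
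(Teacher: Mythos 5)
Your overall strategy is the same as the paper's (apply Theorem~\ref{reg-bound} for the upper bound, then invoke \cite[Corollary 3.5]{mr18} for the matching lower bound), but the combinatorial bookkeeping contains genuine errors that prevent the upper bound from coming out to $2k+h$. The root problem is the structure of the gluing: the free vertices of $K_{1,3}$ are its \emph{leaves}, not its center (the center lies in three maximal cliques), so each star is attached to $v$ by a leaf. Thus from $v$ there is a bridge to each star's center, and each center carries two further pendant edges and has block degree $3$. Consequently, for $k \geq 2$ the spine is a path leaf--center--$v$--center--leaf of length $4$, not $2$; and for $k > 2$ the centers of the $k-2$ stars avoiding the spine lie off $P$ with $\bd = 3$, so $C_G \neq \emptyset$, contrary to your claim. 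You also invert the definition of $b(G)$: it counts large end-blocks that \emph{do} intersect the spine, and since every triangle contains $v \in V(P)$, we get $b(G) = h$, not $0$. The correct tally, as in the paper, is $e_2(G) = 0$, $\ell(G) = 4$ (for $k \geq 2$; $\ell = 2$ when $k=1$), $b(G) = h$, and $\sum_{x \in C_G} \max\{\lbd(x),2\} = 2(k-2)$ when $k > 2$, giving exactly $0 + 4 + h + 2(k-2) = 2k + h$. Your tally instead yields $e_2 + 2 + 0 + 0$, and there is no legitimate way to charge the missing $2k + h - 2$ to $e_2$: the off-spine edges are either incident to $v$ or to a star center, both of which have block degree at least $3$ (and the triangle edges are accounted for through $b(G)$ and the $\lbd$ terms, not through $e_2$).

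Your proposed ``cleaner'' cross-check also does not apply: the observation preceding Theorem~\ref{reg-bound} concerns gluing \emph{cliques} of size at least three at a vertex, and $K_{1,3}$ is not a clique, so that result says nothing about $F_{h,k}(v)$ when $k \geq 1$. The concluding step --- quoting the lower bound of \cite[Corollary 3.5]{mr18} (in the paper's proof this appears as the verification $i(F(v)) = k+1$ and $cdeg(v) = h+k$ in the notation of \cite{mr18}) and combining it with the upper bound --- is correct and is exactly what the paper does; the gap is entirely in the computation of the four invariants feeding into Theorem~\ref{reg-bound}.
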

\begin{proof}
Let $G = F_{h,k}(v)$. Following the notation in Theorem
\ref{reg-bound}, we get $e_2(G) = 0$ and $b(G) = h$. If $k \leq 2$, then
$C(G) = \emptyset$ and if $k > 2$, then $C(G)$ consists of all the
certer vertices of $k-2$ copies of $K_{1,3}$ outside a fixed spine.
Therefore, it follows from Theorem \ref{reg-bound} that $\reg(S/J_G)
\leq 2k + h$. Following the notation in the article \cite{mr18}, it
can be seen that $i(F(v)) = k+1$ and $cdeg(v) = h+k$. This proves the
assertion.
\end{proof}

It may also be noted that the upper bound is not attained by
all block graphs. For example, in the case of the graph considered in
\cite[Example 3.8]{mr18}, our bound gives the value $6$ while the
actual regularity is $5$.


\textbf{Acknowledgement:} We thank the Science and Engineering
Research Board (SERB) of Government of India for partially funding
this work through the Extra Mural Project Grant No. EMR/2016/001883.
We also thank the National Board for Higher Mathematics for partially
funding this work through the project No. 02011/23/2017/R\&D II/4501.
We also thank the anonymous reviewer for the valuable comments.

\bibliographystyle{abbrv}  
\bibliography{refs_reg}

\end{document}